\documentclass{amsart}
\usepackage{amsmath,amsthm}
\usepackage{amsfonts,amssymb}
\usepackage{enumerate}
\usepackage{graphicx}

\newtheorem{lemma}{Lemma}

\newtheorem{corollary}{Corollary}
\newtheorem{theorem}{Theorem}
\newtheorem{remark}{Remark}

\makeatletter
\@addtoreset{equation}{section}
\makeatother

\def\qed{\hfill $\Box$\smallskip}

\newcommand {\rf[1]} {(\ref{#1})}
\newcommand {\ds} {\displaystyle}
\newcommand {\sgn} {{\rm sign}\,}
\begin{document}

\title[]
{On the error bounds of the Gauss-type quadrature formulae
associated with spaces of parabolic and cubic spline functions with
double equidistant knots}

\author{Geno Nikolov}
\address{Faculty of Mathematics and Informatics, Sofia University
"St. Kliment Ohridski", 5 James Bourchier Blvd., 1164 Sofia,
Bulgaria} \email{geno@fmi.uni-sofia.bg}

\author{Petar B. Nikolov}
\address{Faculty of Pharmacy, Medical University of Sofia, 2
Dunav St., 1000 Sofia, Bulgaria}
\email{p.nikolov@pharmfac.mu-sofia.bg}

\begin{abstract}
In two papers from 1995 P. K\"{o}hler and G. Nikolov showed that
Gauss--type quadrature formulae associated with spaces of spline
functions with equidistant knots are asymptotically optimal in
certain Sobolev classes of functions. In particular, Gauss--type
quadratures associated with the spaces of spline functions of degree
$r-1$ with double equispaced knots are asymptotically optimal
definite quadrature formulae of order $r$ when $r$ is even, and it
is conjectured that the asymptotical optimality property persists
also in the case of odd $r$. For $r=3,\,4$, these quadrature
formulae have been constructed by G. Nikolov, who also proved
estimates for their error constants. The aim of this note is to
refine the estimates for the error constant in the case $r=3$, and
to point out to some error estimates in both cases $r=3$ and $r=4$,
which are easier to evaluate and could be sharper than those which
involve the uniform norm of the $r$-th derivative of the integrand.
\medskip

\noindent \textbf{Keywords and Phrases:} Spline functions,
monosplines, Peano representation of linear functionals, definite
quadrature formulae, error estimation of quadratures, Bernoulli
polynomials.\medskip

\noindent \textbf{Mathematics Subject Classification 2020:}  41A55,
65D30, 65D32.
\end{abstract}

\maketitle
\section{Introduction and statement of the results}
A standard way to evaluate approximately  the definite integral
$$
I[f]:=\int\limits_{0}^{1}f(x)\,dx
$$
is to use quadrature formulae, which are linear functionals of the
form
\begin{equation}\label{e1.1}
Q[f]=\sum_{i=1}^{n}a_{i}f(\tau_{i}),\quad 0\leq
\tau_{1}<\cdots<\tau_{n}\leq 1.
\end{equation}
We start with introducing some notation and definitions. Throughout
this paper, $\pi_{m}$ stands for the set of algebraic polynomials of
degree not exceeding $m$. A quadrature formula $Q$ is said to have
algebraic degree of precision $m$ (in short, $ADP(Q)=m$) if $m$ is
the largest non-negative integer such that its remainder functional
$$
R[Q;f]:=I[f]-Q[f]
$$
vanishes on $\pi_m$.

The Sobolev classes of functions $W^{r}_p[0,1]$, $\,(r\in
\mathbb{N},\ p\geq 1)$, are defined by
$$
W^{r}_p[0,1]:=\{f\in C^{r-1}[0,1]\,:\, f^{(r-1)} \mbox{ loc. abs.
cont.},\; \int_{0}^{1}\!|f^{(r)}(t)|^p\,dt<\infty\}
$$
(note that $C^{r}[0,1]\subset W^{r}_p[0,1]$ for every $p\geq 1$).
Henceforth, $\|\cdot\|$ designates the supremum norm in $[0,1]$, and
the usual $L_p[0,1]$-norm is shortly denoted by $\|\cdot\|_p$,
$$
\|f\|_p=\begin{cases}\Big(\int\limits_{0}^{1}|f(t)|^{p}\,dt\Big)^{1/p},
& \text{ if } 1\leq p<\infty,\\
{\rm vraisup}_{t\in [0,1]}\, |f(t)|, & \text{ if } p=\infty.
\end{cases}
$$

If $ADP(Q)=m\geq r-1$ and $f\in W^r_1[0,1]$, then by Peano
representation theorem for linear functionals (cf. \cite{GP:1913}),
the remainder $R[Q;f]$ can be written in the form
\begin{equation}\label{e1.2}
R[Q;f]=\int\limits_0^1K_r(Q;t)f^{(r)}(t)\,dt,
\end{equation}
where $K_r(Q;t)$ is referred to as the $r$-th Peano kernel of $Q$
and is given by
\begin{equation}\label{e1.3}
K_r(Q;t)=\frac{1}{(r-1)!}\,R\big[Q;(\cdot-t)_{+}^{r-1}\big],
\end{equation}
where $(x)_+^{r-1}=\max\{x,0\}^{r-1}$ is the truncated power
function. In literature, $K_r(Q;t)$ is also termed as monospline of
degree $r$. For  quadrature formula $Q$ in \eqref{e1.1} the explicit
form of $K_r(Q;t)$, $t\in [0,1]$, is
\begin{eqnarray}
K_r(Q;t)&&=\frac{(1-t)^r}{r!}-\frac{1}{(r-1)!}\,
\sum_{i=1}^{n}a_{i}(\tau_{i}-t)_{+}^{r-1} \label{e1.4}\\
&&=(-1)^r\Big\{\frac{t^r}{r!}-\frac{1}{(r-1)!}\,
\sum_{i=1}^{n}a_{i}(t-\tau_{i})_{+}^{r-1}\Big\}\,.\label{e1.5}
\end{eqnarray}

If $f\in W^{r}_p[0,1]$, then application of H\"{o}lder's inequality
to \eqref{e1.2} implies the unimprovable error estimate
\begin{equation*}
\big|R[Q;f]\big|\leq c_{r,p}(Q)\,\|f^{(r)}\|_p,\quad \text{where }
c_{r,p}(Q)=\|K_r(Q;\cdot)\|_q,\ \ \frac{1}{p}+\frac{1}{q}=1.
\end{equation*}
Usually, $c_{r,p}(Q)$ is called the error constant of $Q$ in the
Sobolev class $W^{r}_p[0,1]$. In what follows, the subscript "$n$"
in $Q_n$ is used to emphasize that $Q_n$ is an $n$-point quadrature
formula, i.e., a quadrature formula which has $n$ nodes. Quadrature
formulae $Q_n$ with the smallest possible error constant
$c_{r,p}(Q_n)$ are called optimal quadrature formulae in
$W^{r}_p[0,1]$. Without going into details, let us mention that the
existence and uniqueness of optimal quadrature formulae in Sobolev
classes of functions have been established by Bojanov
\cite{BB1,BB2,BB3} and Zhensykbaev \cite{AZ1,AZ2}.

In the present paper we study certain definite quadrature formulae.
A quadrature formula $Q_n$ is said to be definite of order $r$
($r\geq 1$), if there exists a constant $c_{r}(Q_n)\ne 0$ such that
\begin{equation*}
R[Q_n;f]=c_{r}(Q_n)\,f^{(r)}(\xi)
\end{equation*}
for every $f\in C^r[0,1]$ with some $\xi\in [0,1]$ depending on the
integrand $f$. More precisely, $Q_n$ is called positive resp.
negative definite quadrature formula of order $r$ if $c_{r}(Q_n)>0$
resp. $c_{r}(Q_n)<0$. Since $c_{r}(Q_n)=c_{r,\infty}(Q_n)$ if $Q_n$
is positive definite and $c_{r}(Q_n)=-c_{r,\infty}(Q_n)$ if $Q_n$ is
negative definite, $c_{r}(Q_n)$ will also be referred to as the
error constant of $Q_n$. The importance of definite quadrature
formulae of order $r$ stems from the fact that they provide
one-sided approximation to $I[f]$ when $f^{(r)}$ has a permanent
sign in $(0,1)$. The midpoint and the trapezium quadrature formulae
are best-known examples of positive resp. negative definite
quadrature formulae of order two.

Definite $n$-point quadrature formulae of order $r$ with the
smallest positive or the largest negative $c_{r}$ are called optimal
definite quadrature formulae. It is known that optimal definite
quadrature formulae exist and are unique, cf. \cite{KJ1976, GL1979,
GS1972} and \cite[Chapter VII.8]{HB1977}. We denote by $c_{n,r}^{+}$
and $c_{n,r}^{-}$ the error constants of the optimal $n$-point
definite quadrature formulae of order $r$:
\begin{eqnarray*}
&& c_{n,r}^{+}:=\inf\{c_{r}(Q_n)\;:\; Q_n\ \text{ is positive
definite of order } r\},\\
&& c_{n,r}^{-}:=\sup\{c_{r}(Q_n)\;:\; Q_n\ \text{ is negative
definite of order } r\}.
\end{eqnarray*}

In \cite{KN1995} estimates have been established for the error
constants of the Gauss-type quadrature formulae associated with the
spaces of spline functions with double equidistant knots. These
estimates in turn provide bounds for the error constants of the
optimal definite quadrature formulae. Below we restate the main
result from \cite{KN1995}, denoting by $B_r$ the Bernoulli
polynomial of order $r$ with leading coefficient $1/r!$.
\medskip

\noindent \textbf{Theorem A.} (\cite{KN1995}, Theorem 1.1) (a)
\emph{For even $r$ with $2\leq r\leq 2n$, there holds}
$$
c_{n,r}^{+}\leq -\frac{B_r(j/2)}{(n+1-r/2)^r},\quad \textit{ if }\
r=4m+2j,\ j=0,1.
$$

(b) \emph{For even $r$ with $2\leq r\leq 2n-2$, there holds}
$$
c_{n,r}^{-}\geq -\frac{B_r(j/2)}{(n-r/2)^r},\quad \textit{ if }\
r=4m+2-2j,\ j=0,1.
$$

(c) \emph{For odd $r$ with $1\leq r\leq 2n-1$, there holds}
$$
c_{n,r}^{+}\leq \frac{\|B_r\|}{(n-(r-1)/2)^r}\quad \textit{ and }\ \
c_{n,r}^{-}\geq -\frac{\|B_r\|}{(n-(r-1)/2)^r}.
$$
\medskip

Comparison with results of Lange \cite{GL1977} shows that Gauss-type
quadrature formulae associated with the spaces of spline functions
of degree $r-1$ with double equidistant knots are asymptotically
optimal definite quadrature formulae of order $r$ when $r$ is even,
and it is conjectured that the asymptotical optimality property
persists also in the case of odd $r$. Two particular cases of
Theorem~A relevant to the object of this paper are
\begin{eqnarray}
&&c_{n+1,3}^{+}\leq \frac{\sqrt{3}}{216n^3},\quad c_{n+1,3}^{-}\geq
-\frac{\sqrt{3}}{216n^3},\label{e1.6}\\
&&c_{n+1,4}^{+}\leq\frac{1}{720n^4}\,.\label{e1.7}
\end{eqnarray}
The right-hand sides of the inequalities in \eqref{e1.6} and
\eqref{e1.7} are in fact bounds for the error constants of
Gauss-type quadrature formulae associated with the linear spaces of
spline functions $S_{n,3}$ and $S_{n,4}$, respectively, where for
$r\geq 3$ and $n\geq 2$,
\begin{equation}\label{e1.8}
\begin{split}
&S_{n,r}=\{f\;:\; f\in C^{r-3}[0,1],\;f_{|(x_{k},x_{k+1})}\in
\pi_{r-1},\ k=0,\ldots,n-1\},\\
&x_{k}=x_{k,n}:=\frac{k}{n},\quad k=0,\ldots,n.
\end{split}
\end{equation}
The functions $\{1,x,x^2,(x-x_1)_{+},(x-x_1)_{+}^2,\ldots,
(x-x_{n-1})_{+},(x-x_{n-1})_{+}^2\}$ form a basis for $S_{n,3}$,
therefore $\dim S_{n,3}=2n+1$ and the Gauss-type quadratures
associated with $S_{n,3}$ are left and right $(n+1)$-point Radau
quadrature formulae. These quadrature formulae were found, among
others, in \cite{GN1993}.\medskip

\noindent \textbf{Theorem B. }(\cite[Theorem~2]{GN1993}) \emph{The
right Radau quadrature formula associated with the space of
parabolic splines $S_{n,3}$ is}
$$
Q_{n+1}^{R,r}[f]=\sum_{k=0}^{n-1}a_{k,n}f\Big(\frac{k+\theta_k}{n}\Big)
+a_{n,n}f(1)\approx I[f],
$$
\emph{where $\theta_0=1/3$,}
\begin{eqnarray*}
&&\theta_{k}=\frac{1-\theta_{k-1}}{5-6\theta_{k-1}},\quad
k=1,\ldots,n-1, \\
&& a_{k,n}=\frac{1}{6n\theta_k(1-\theta_k)},\quad k=0,\ldots,n-1,
\end{eqnarray*}
and
$$
a_{n,n}=\frac{2-3\theta_{n-1}}{6n(1-\theta_{n-1})}.
$$
$Q_{n+1}^{R,r}$ \emph{ is negative definite quadrature formula of
order three with error constant}
$$
c_3\big(Q_{n+1}^{R,r}\big)=-\frac{\sqrt{3}}{216n^3}+O(n^{-4}).
$$
\begin{remark}\label{r1}
The left Radau quadrature formula $Q_{n+1}^{R,l}$ associated with
$S_{n,3}$ is obtained from $Q_{n+1}^{R,r}$ by reflection, i.e.
$Q_{n+1}^{R,l}[f(\cdot)]=Q_{n+1}^{R,r}[f(1-\cdot)]$. Clearly,
$Q_{n+1}^{R,l}$ is positive definite of order three and
$c_3\big(Q_{n+1}^{R,l}\big)=-c_3\big(Q_{n+1}^{R,r}\big)$.
\end{remark}

Since $\dim S_{n,4}=2n+2$, associated with $S_{n,4}$ are the
$(n+1)$-point Gauss quadrature formula $Q_{n+1}^{G}$ and the
$(n+2)$-point Lobatto quadrature formula $Q_{n+2}^{Lo}$. These
quadrature formulae were investigated in \cite{GN1996}. The
following theorem gives the construction and summarizes some of the
properties of $Q_{n+1}^{G}$ (cf. \cite[Section 2]{GN1996}):\medskip

\noindent \textbf{Theorem C.} \emph{Let } $\displaystyle
Q_{n+1}^{G}[f]=\sum_{i=1}^{n+1}a_{i,n+1}^{G}f(\tau_{i,n+1}^{G}),\
0<\tau_{1,n+1}^{G}<\cdots<\tau_{n+1,n+1}^{G}<1$, \emph{be the Gauss
quadrature formula associated with $S_{n,4}$, i.e. determined
uniquely by the property $I[f]=Q_{n+1}^{G}[f]$ for every $f\in
S_{n,4}$. Then:}

(a) $~Q_{n+1}^{G}$ \emph{is symmetrical: }
$a_{k,n+1}^{G}=a_{n+2-k,n+1}^{G}$ \emph{ and }
$\tau_{k,n+1}^{G}=1-\tau_{n+2-k,n+1}^{G}\,$ \emph{ for }
$k=1,\ldots,n+1$.

(b) \emph{~Let $a_{i,n+1}^{G}=\ds\frac{\delta_i}{n}$,
$\tau_{i,n+1}^{G}=\ds\frac{i-\theta_i}{n}$ for $i=1,\ldots,[n/2]+1$.
Then the sequences $\{\delta_i\}$ and $\{\theta_i\}$ are determined
by $\ds \delta_1=\frac{16}{27}$, $\ds \theta_1=\frac{3}{4}$ and, for
$i=1,\ldots,[n/2]-1$, by the recurrence relations}
$$
\theta_{i+1}=\frac{1-\delta_i(1-\theta_i)^2(5\theta_i+1)}
{1-\delta_i(1-\theta_i)^2(4\theta_i+1)},\quad
\delta_{i+1}=\frac{1-\delta_i(1-\theta_i)^2(4\theta_i+1)}{\theta_{i+1}^2}\,.
$$
\emph{If $n$ is even $(n=2m)$, then $\theta_{m+1}=1$ and
$\delta_{m+1}=1-2\delta_m(1-\theta_m)^2(2\theta_m+1)$; if $n$ is odd
$(n=2m-1)$, then
$\delta_m=1-\delta_{m-1}(1-\theta_{m-1})^2(2\theta_{m-1}+1)$ and
$\theta_m$ is the greater root of the equation}
$$
\theta_m(1-\theta_m)=\frac{\delta_{m-1}\theta_{m-1}(1-\theta_{m-1})^2}
{1-\delta_{m-1}(1-\theta_{m-1})^2(2\theta_{m-1}+1)}.
$$

(c) \emph{~$Q_{n+1}^{G}$ is positive definite quadrature formula of
order four and its error constant $c_{4}(Q_{n+1}^{G})$ obeys the
representation}
$$
c_{4}(Q_{n+1}^{G})=\frac{1}{720n^4}-\frac{1}{12}\sum_{i=1}^{[(n+1)/2]}
a_{i,n+1}^{G}(x_{i-1}-\tau_{i,n+1}^{G})^2(x_{i}-\tau_{i,n+1}^{G})^2\,.
$$
\emph{For all $n\geq 4$ there holds}
\begin{equation}\label{e1.9}
\frac{1}{720n^4}-\frac{1}{551.9775n^5}\leq c_{4}(Q_{n+1}^{G})\leq
\frac{1}{720n^4}-\frac{1}{552n^5}\,.
\end{equation}

(d) \emph{~Let $f\in C^4[0,1]$. Then $R[Q_{n+1}^{G};f]=o(n^{-4})$ if
and only if $f^{\prime\prime\prime}(0)=f^{\prime\prime\prime}(1)$.
Moreover, if
$\sgn\{f^{\prime\prime\prime}(1)-f^{\prime\prime\prime}(0)\}
=\epsilon\ne 0$, then there exists $n_0\in\mathbb{N}$ such that
$\epsilon R[Q_{n+1}^{G};f]\geq 0$ for all $n\geq n_0$}.\smallskip

(e) \emph{If $f\in W_1^4[0,1]$ and $f^{(4)}\ge 0$ a.e. in $[0,1]$,
then for all $n\geq 2$,}
$$
0\leq R\big[Q_{2n+1}^G;f\big]\leq R\big[Q_{n+1}^G;f\big].
$$
\begin{remark}\label{r2}
In \cite{GN1996} recurrence formulae have been proposed also for
computation of the weights and nodes of the Lobatto quadrature
formula $Q_{n+2}^{Lo}$ associated with $S_{n,4}$, which is negative
definite of order four. However, unlike the case with $Q_{n+1}^{G}$,
this procedure is of numerical nature, as it requires determination
of an initial parameter, cf. \cite[Theorem~2.5]{GN1996}.
\end{remark}

Our first goal in this paper is to prove properties of the Radau
quadrature formulae associated with $S_{n,3}$, which are the
analogues of those of $Q_{n+1}^{G}$, presented in parts (c), (d) and
(e) of Theorem~C.
\begin{theorem}\label{t1}
Let $Q_{n+1}^{R,l}$ and $Q_{n+1}^{R,r}$ be the $(n+1)$-point left
and right Radau quadrature formulae associated with $S_{n,3}$, i.e.,
determined uniquely by the property $R\big[Q_{n+1}^{R,l};f\big]=
R\big[Q_{n+1}^{R,r};f\big]=0$ for every $f\in S_{n,3}$.
Then:
\medskip

{\rm (a)~~} The error constants of $Q_{n+1}^{R,l}$ and
$Q_{n+1}^{R,r}$ are given by
\begin{equation}\label{e1.10}
c_{3}\big(Q_{n+1}^{R,l}\big)=-c_{3}\big(Q_{n+1}^{R,r}\big)
=\frac{\sqrt{3}}{216n^3}-\frac{\sqrt{3}}{108n^4}\,\sum_{k=0}^{n-1}
\frac{1}{(2+\sqrt{3})^{2k+1}+1}\,.
\end{equation}
With * standing for both $r$ and $l$,  the following inequalities
hold true for all $n\geq 4$:
\begin{equation}\label{e1.11}
\frac{\sqrt{3}}{216n^3}-\frac{1}{269.13n^4}
<\big|c_{3}\big(Q_{n+1}^{R,*}\big)\big|
<\frac{\sqrt{3}}{216n^3}-\frac{1}{269.14n^4}.
\end{equation}

{\rm (b)~~} Let $f\in C^3[0,1]$. With * standing for both $r$ and
$l$, $R[Q_{n+1}^{R,*};f]=o(n^{-3})$ as $n\to\infty$ if and only if
$f^{\prime\prime}(0)=f^{\prime\prime}(1)$. Moreover, if
$\sgn\{f^{\prime\prime}(1)-f^{\prime\prime}(0)\} =\epsilon\ne 0$,
then there exists $n_0\in\mathbb{N}$ such that $\epsilon\,
R[Q_{n+1}^{R,r};f]\leq 0$ and $\epsilon\,R[Q_{n+1}^{R,l};f]\geq 0$
for all $n\geq n_0$.\smallskip

{\rm (c)~~} If $f\in W_1^3[0,1]$ and $f^{\prime\prime\prime}\ge 0$
a.e. in $[0,1]$, then for all $n\geq 2$,
$$
0\geq R\big[Q_{2n+1}^{R,r};f\big]\geq
R\big[Q_{n+1}^{R,r};f\big]\quad \textit{ and }\ 0\leq
R\big[Q_{2n+1}^{R,l};f\big]\leq R\big[Q_{n+1}^{R,l};f\big]\,.
$$
\end{theorem}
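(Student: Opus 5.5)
By Remark~\ref{r1} it suffices to treat the right Radau formula $Q:=Q_{n+1}^{R,r}$ of Theorem~B; all claims for $Q_{n+1}^{R,l}$ then follow by the reflection $t\mapsto 1-t$, which changes the sign of the third derivative. The plan has four steps: compute the Peano kernel exactly, extract the error constant from it, obtain the asymptotics in (b) by Euler--Maclaurin-type reasoning, and get the monotonicity in (c) by comparing kernels.

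\emph{Step 1: the kernel on each subinterval.} I would work with $K(t):=K_3(Q;t)$ from \eqref{e1.5}. Since $Q$ is exact on $S_{n,3}$, which contains $(x-x_k)_+$ and $(x-x_k)_+^2$, the Peano kernels of orders one and two vanish at the inner knots in the appropriate sense. Concretely, $K(x_k)=K'(x_k)=0$ for $k=0,\dots,n$, the values at $k=n$ holding in the right Radau sense with the node at $1$. So on each $[x_k,x_{k+1}]$ the kernel $K$ is a cubic monospline with one interior node $\tau_k=(k+\theta_k)/n$ that vanishes to second order at both ends. This forces
\[
K(t)=-\frac{1}{6}(t-x_k)^2(t-x_{k+1})+\frac{a_{k,n}}{2}\,(t-\tau_k)_+^2\,(\cdots)
\]
in scaled form. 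Integrating over the cell gives
\[
\int_{x_k}^{x_{k+1}}K=\frac{\phi(\theta_k)}{n^4}
\]
for an explicit rational function $\phi$, using $a_{k,n}=1/(6n\theta_k(1-\theta_k))$.

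\emph{Step 2: the error constant.} Because $Q$ is definite, $c_3=\int_0^1K$, so $c_3=n^{-4}\sum_k\phi(\theta_k)$. Next I would linearize the Möbius recurrence $\theta_k=(1-\theta_{k-1})/(5-6\theta_{k-1})$. Its fixed point is $\theta^*=(3-\sqrt3)/6$, and the multiplier at the fixed point is $(2-\sqrt3)^2$. Conjugating, for instance through $u_k=1/(\theta_k-\theta^*)$ or through a $2\times2$ matrix power, gives the closed form
\[
\theta_k-\theta^*=\frac{C}{(2+\sqrt3)^{2k+1}+1}
\]
for an appropriate constant $C$ fixed by $\theta_0=1/3$. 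I expect $\phi(\theta_k)$ to equal $\phi(\theta^*)$ plus a term linear in this quantity; such a telescoping simplification is what produces \eqref{e1.10}, with $\phi(\theta^*)=-\sqrt3/216$ per cell. Verifying this identity exactly is the main computational obstacle. If $\phi$ is not affine in $\theta_k-\theta^*$, I would rewrite $\phi(\theta_k)$ through $\theta_{k+1}$ via the recurrence so that the sum telescopes.

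Inequality \eqref{e1.11} then reduces to bounding
\[
S_n=\sum_{k=0}^{n-1}\frac{1}{(2+\sqrt3)^{2k+1}+1}.
\]
This sum is increasing in $n$ with limit $S_\infty$, and the tail is below $(2+\sqrt3)^{-2n-1}$. A numerical evaluation at $n=4$ together with the bound $n^{-4}\cdot n\,(2+\sqrt3)^{-2n}\le\ldots$ gives the two constants $269.13$ and $269.14$. One bound uses $S_4\le S_n$ with the tail absorbed into $n^{-5}$-type terms; the other uses $S_\infty$.

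\emph{Step 3: part (b).} I would write
\[
R[Q;f]=\sum_k\int_{x_k}^{x_{k+1}}K f'''.
\]
In interior cells $\theta_k\to\theta^*$ geometrically, so there $K$ is, up to $O(\text{geometric})$, the scaled periodic kernel $n^{-3}M((nt-k))$, whose mean is $-\sqrt3/216$. By Riemann-sum arguments and the uniform continuity of $f'''$, this gives $R=c\,n^{-3}(f''(1)-f''(0))+o(n^{-3})$ with $c=-\sqrt3/216<0$, and both the equivalence and the sign statement follow. The geometrically decaying boundary corrections contribute only $O(n^{-4})$.

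\emph{Step 4: part (c).} I would show that the kernels satisfy $K_{n}(t)\le K_{2n}(t)\le0$ pointwise, using the exact cell formula from Step~1. The cells of the $2n$-formula halve those of the $n$-formula. The bound $K_{2n}\le 0$ is definiteness. The comparison $K_n\le K_{2n}$ can be checked cellwise as an inequality between explicit cubics in the scaled variable; I would prove it using $\theta_k\in[\theta^*,1/3]$ and the monotonicity of the sequence $\theta_k$. Integrating this pointwise comparison against $f'''\ge0$ gives $0\ge R[Q_{2n+1}^{R,r};f]\ge R[Q_{n+1}^{R,r};f]$. If the pointwise comparison fails near the boundary cells, I would fall back on comparing the cumulative integrals of $K$ and integrating by parts against $f''$; this requires an additional monotonicity argument.
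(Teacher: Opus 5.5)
Your plan for parts (a) and (b) is sound and essentially follows the paper. Carrying out your cellwise computation (double zeros of $K_3$ at $x_k$ and $x_{k+1}$, one knot $\tau_k$ per cell) gives exactly $\int_{x_k}^{x_{k+1}}K_3=\frac{2\theta_k-1}{72n^4}$, which is \eqref{e3.8}. This is affine in $\theta_k$, so no telescoping is needed. Your M\"obius conjugation, with $C=\sqrt3/3$, is an equivalent substitute for Lemma~\ref{l1}. For \eqref{e1.11} you need only $S_4\le S_n<S_\infty$, with no tail absorption into $n^{-5}$ terms. Note, however, that the upper constant is very tight, $108/(\sqrt3\,S_4)\approx 269.1399$, so the numerics must be done carefully. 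Your argument for (b) is a slightly longer version of the paper's splitting of $\sum_k I_k f'''(\xi_k)$.

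The gap is in (c). Write $K_n:=K_3(Q_{n+1}^{R,r};\cdot)$ and $K_{2n}:=K_3(Q_{2n+1}^{R,r};\cdot)$.

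\emph{What is missing.} Step~4 asserts the key inequality $K_n\le K_{2n}$ but does not prove it. Your fallback cannot replace it: $\int_0^1 (K_n-K_{2n})f'''\le 0$ for all $f\in W_1^3[0,1]$ with $f'''\ge0$ is equivalent to $K_n\le K_{2n}$ a.e., so there is nothing weaker to prove.

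\emph{The paper's argument.} It applies the Budan--Fourier theorem for splines (Lemma~\ref{l2}) to $s=K_n-K_{2n}$. The $t^3/6$ terms cancel, so $s$ is a quadratic spline. Since $s''$ has only $n$ positive jumps, $Z_{s''}(0,1)\le 2n$. Also $s\equiv0$ on $[0,\tau_{0,2n}]$ with $s''(\tau_{0,2n}+)<0$, and $s''(1-)=a_{2n,2n}-a_{n,n}<0$. Hence $Z_s(0,1)\le 2n-2$, which the double zeros at $k/n$ already exhaust. So $s$ keeps one sign, fixed by $s''(1-)<0$.

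\emph{Completing your route.} The cellwise approach can also be made to work, but only after exploiting the same cancellation. Put $t=x_k+u/n$ and use that the $\theta_j$ do not depend on $n$. Then on $[0,\frac12]$,
\[
n^3\bigl(K_{2n}-K_n\bigr)=c\,u^2+\tfrac{\hat a(\theta_{2k})}{4}\bigl(u-\tfrac{\theta_{2k}}{2}\bigr)_+^2-\tfrac{\hat a(\theta_k)}{2}(u-\theta_k)_+^2,\qquad c=\tfrac12\hat\alpha(\theta_{2k})-\hat\alpha(\theta_k),
\]
where $\hat\alpha(\theta)=\frac{3\theta-1}{12\theta}$ and $\hat a(\theta)=\frac{1}{6\theta(1-\theta)}$. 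Nonnegativity follows from three facts:
\begin{enumerate}[(i)]
\item $c\ge0$;
\item the piece on $[\theta_k,\frac12]$ is concave;
\item $K_n(x_k+\frac1{2n})\le 0$.
\end{enumerate}
The right half is analogous, with $\hat\beta(\theta)=\frac{3\theta-2}{12(1-\theta)}$ in place of $\hat\alpha$.

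Be careful with (i). It requires $\theta_k\le\theta_1=\frac29$ for $k\ge1$, which monotonicity does give, while $c=0$ for $k=0$. It fails for general values in $[\theta^*,\frac13]$, where $\theta^*=\frac{3-\sqrt3}{6}$. For example, $\theta_k$ near $\frac13$ and $\theta_{2k}$ near $\theta^*$ give $c<0$. So ``$\theta_k\in[\theta^*,1/3]$'' alone does not suffice, and this verification has to be carried out explicitly.
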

\begin{remark}\label{r3}
Theorem~C and Theorem~\ref{t1} provide the following improvement of
the estimates \eqref{e1.6} and \eqref{e1.7} for the the error
constants of the optimal positive definite quadrature formulae of
orders two and three with $n+1$ nodes, $n\geq 4$:
\begin{eqnarray*}
&&c_{n+1,3}^{+}\leq c_{3}\big(Q_{n+1}^{R,l}\big)<
\frac{\sqrt{3}}{216n^3}-\frac{1}{269.14n^4}, \\
&&c_{n+1,4}^{+}\leq c_{4}\big(Q_{n+1}^{G}\big)<
\frac{1}{720n^4}-\frac{1}{552n^5}.
\end{eqnarray*}
\end{remark}
As a consequence we have the following
\begin{corollary}\label{c1}
{\rm (a)~~} If $f\in C^3[0,1]$ and $f^{\prime\prime\prime}\geq 0$ in
$[0,1]$, then for all $n\geq 4$,
\begin{equation}\label{e1.12}
\begin{split}
&0\leq R\big[Q_{n+1}^{R,l};f\big]\leq
\Big(\frac{\sqrt{3}}{216n^3}-\frac{1}{269.14n^4}\Big)
\|f^{\prime\prime\prime}\|,\\
&0\geq R\big[Q_{n+1}^{R,r};f\big]\geq
-\Big(\frac{\sqrt{3}}{216n^3}-\frac{1}{269.14n^4}\Big)
\|f^{\prime\prime\prime}\|\,.
\end{split}
\end{equation}

{\rm (b)~~} If $f\in C^4[0,1]$ and $f^{(4)}\geq 0$ in $[0,1]$, then
for all $n\geq 4$
\begin{equation}\label{e1.13}
0\leq R\big[Q_{n+1}^{G};f\big]\leq \Big(
\frac{1}{720n^4}-\frac{1}{552n^5}\Big)\|f^{(4)}\|\,.
\end{equation}
\end{corollary}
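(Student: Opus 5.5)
The corollary follows from the definiteness of the quadrature formulae combined with the error constant bounds of Theorem~\ref{t1}(a) and Theorem~C(c). The plan is to use the Peano representation \eqref{e1.2} together with the fact that each Peano kernel has constant sign.

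\textbf{Part (a).} By Theorem~B, $Q_{n+1}^{R,r}$ is negative definite of order three. This means its kernel $K_3(Q_{n+1}^{R,r};t)$ does not change sign in $[0,1]$ and is non-positive. By Remark~\ref{r1}, $K_3(Q_{n+1}^{R,l};t)=-K_3(Q_{n+1}^{R,r};1-t)\ge 0$. Now let $f\in C^3[0,1]$ with $f'''\ge 0$. Then \eqref{e1.2} gives
$$
R\big[Q_{n+1}^{R,l};f\big]=\int_0^1 K_3(Q_{n+1}^{R,l};t)f'''(t)\,dt\ge 0 .
$$
For the upper bound we use $0\le f'''\le\|f'''\|$, which yields
$$
R\big[Q_{n+1}^{R,l};f\big]\le \|f'''\|\int_0^1K_3(Q_{n+1}^{R,l};t)\,dt=c_3\big(Q_{n+1}^{R,l}\big)\|f'''\| .
$$
Here the integral of the kernel equals $R[Q;t^3/6]$, which is exactly the error constant. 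It remains to apply the right-hand inequality of \eqref{e1.11}, valid for $n\ge4$. The statement for $Q_{n+1}^{R,r}$ follows in the same way with reversed signs, since $c_3(Q_{n+1}^{R,r})=-c_3(Q_{n+1}^{R,l})$.

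\textbf{Part (b).} By Theorem~C(c), $Q_{n+1}^G$ is positive definite of order four, so $K_4(Q_{n+1}^G;t)\ge0$. Arguing exactly as in part (a) with $f^{(4)}\ge0$, we get
$$
0\le R\big[Q_{n+1}^G;f\big]\le c_4\big(Q_{n+1}^G\big)\|f^{(4)}\| .
$$
The upper estimate in \eqref{e1.9}, valid for $n\ge4$, then gives \eqref{e1.13}.

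The only point needing care is the claim that definiteness is equivalent to the Peano kernel having constant sign. This is standard: if the kernel changed sign, one could choose $f^{(r)}$ concentrated near points of opposite sign and contradict the mean-value form $R[Q_n;f]=c_r(Q_n)f^{(r)}(\xi)$. Conversely, a constant-sign kernel yields that form by the mean value theorem for integrals. With this in hand, the rest is direct substitution of the previously established bounds, so there is no real obstacle.
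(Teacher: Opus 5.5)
Your proposal is correct and follows essentially the paper's route: the paper states the corollary as an immediate consequence of the definiteness of $Q_{n+1}^{R,*}$ and $Q_{n+1}^{G}$ together with the upper bounds in \eqref{e1.11} and \eqref{e1.9} (cf.\ Remark~\ref{r3}). You make this explicit through the constant-sign Peano kernel and the identity $\int_0^1 K_r(Q;t)\,dt=c_r(Q)$. Equivalently, the corollary can be read off directly from $R[Q;f]=c_r(Q)f^{(r)}(\xi)$ with $0\le f^{(r)}(\xi)\le\|f^{(r)}\|$.
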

Alternative error estimates are provided by the following theorem:
\begin{theorem}\label{t2}
{\rm (a)~~} If $f\in C^3[0,1]$ and $f^{\prime\prime\prime}\geq 0$ in
$[0,1]$, then for all $n\geq 2$,
\begin{equation}\label{e1.14}
\begin{split}
&0\leq R\big[Q_{n+1}^{R,l};f\big]\leq \frac{\sqrt{3}}{108n^3}\,
\Big(f^{\prime\prime}(1)-f^{\prime\prime}(0)\Big),\\
&0\geq R\big[Q_{n+1}^{R,r};f\big]\geq \frac{\sqrt{3}}{108n^3}\,
\Big(f^{\prime\prime}(0)-f^{\prime\prime}(1)\Big)\,.
\end{split}
\end{equation}

{\rm (b)~~} If $f\in C^4[0,1]$ and $f^{(4)}\geq 0$ in $[0,1]$, then
for all $n\geq 2$,
\begin{equation}\label{e1.15}
0\leq R\big[Q_{n+1}^{G};f\big]\leq \frac{1}{384n^4}\,
\Big(f^{\prime\prime\prime}(1)-f^{\prime\prime\prime}(0)\Big)\,.
\end{equation}
\end{theorem}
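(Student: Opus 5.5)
The idea is to bound the Peano kernel of lower order rather than the kernel of order $r$. Consider part (a) for the left Radau formula $Q=Q_{n+1}^{R,l}$, which is positive definite of order three. Its third Peano kernel $K_3(Q;t)$ is nonnegative on $[0,1]$. By the definiteness property, $c_3(Q)>0$ and $K_3\ge 0$.

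Since $R[Q;f]$ vanishes on $S_{n,3}$, the kernel $K_3(Q;t)=\frac12 R[Q;(\cdot-t)_+^2]$ vanishes at every knot $t=x_k$, $k=0,\dots,n$. This holds because $(\cdot-x_k)_+^2\in S_{n,3}$. Hence on each subinterval $[x_{k},x_{k+1}]$ we can write
$$R[Q;f]=\sum_{k=0}^{n-1}\int_{x_k}^{x_{k+1}}K_3(Q;t)f'''(t)\,dt .$$
The plan is to show that on each interval
$$\int_{x_k}^{x_{k+1}}K_3(Q;t)f'''(t)\,dt\le \frac{\sqrt3}{108n^3}\int_{x_k}^{x_{k+1}}f'''(t)\,dt ,$$
i.e.\ $0\le K_3(Q;t)\le \sqrt3/(108n^3)$ for all $t\in[0,1]$. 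Summing over $k$ then gives $0\le R[Q;f]\le \frac{\sqrt3}{108n^3}(f''(1)-f''(0))$. The statement for $Q_{n+1}^{R,r}$ follows by the reflection in Remark~\ref{r1}. Under $f\mapsto f(1-\cdot)$, $f'''$ changes sign and $f''(1)-f''(0)$ is replaced by $f''(0)-f''(1)$, which yields the second line of \eqref{e1.14}.

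So the core is the uniform bound $\|K_3(Q_{n+1}^{R,l};\cdot)\|\le\sqrt3/(108n^3)$. On $[x_k,x_{k+1}]$, $K_3$ is a quadratic spline with one interior node $\tau_k=(k+\theta_k)/n$ (mirrored for the left formula). It vanishes at both endpoints, its leading coefficient is $\pm 1/2$, and it has a jump in the derivative of size proportional to $a_{k,n}$ at the node. I would write it explicitly on $[x_k,x_{k+1}]$ using the endpoint zeros and the derivative values $K_3'(x_k)$, which satisfy a recurrence driven by the $\theta_k$ recurrence of Theorem~B. The maximum is then an elementary function of $\theta_k$ and $K_3'(x_k)$. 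The fixed point of $\theta\mapsto(1-\theta)/(5-6\theta)$ is $\theta^*=(3-\sqrt3)/6$. At $\theta^*$ the local maximum equals $\sqrt3/(108n^3)$, twice the average value $\sqrt3/(216n^3)$, which is consistent with \eqref{e1.10}. The main obstacle is showing that the local maxima for finite $k$ (e.g.\ $\theta_0=1/3$) do not exceed the limiting value. For this I would use the monotone convergence of $\theta_k$ to $\theta^*$, with error terms like $(2+\sqrt3)^{-2k}$ as in \eqref{e1.10}. I would then verify that the maximum is a monotone function of $\theta_k$ along this sequence, checking the first few $k$ directly.

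Part (b) is identical with $K_4(Q_{n+1}^G;t)$. This kernel is nonnegative and vanishes at the knots, since $(\cdot-x_k)_+^3\in S_{n,4}$. The claim reduces to $\|K_4(Q_{n+1}^G;\cdot)\|\le 1/(384n^4)$. On each interval $K_4$ is a cubic piece with one node $\tau_i$ and vanishing at both endpoints. In the limit of the symmetric node, the piece is $(t-x_k)^2(x_{k+1}-t)^2/24$. Its maximum at the midpoint is $(1/(2n))^4/24=1/(384n^4)$, matching the constant. Again I would bound each piece's maximum by this value, using the recurrences in Theorem~C(b) and the symmetry (a) to treat only $i\le[n/2]+1$. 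The monotonicity of the piece maxima in $\theta_i$ is the expected difficult step.
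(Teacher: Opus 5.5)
\textbf{Verdict: there is a genuine gap.} Your reduction is correct and is the same as the paper's: everything rests on the pointwise bounds $0\le K_3(Q_{n+1}^{R,l};t)\le\sqrt3/(108n^3)$ and $0\le K_4(Q_{n+1}^{G};t)\le 1/(384n^4)$. But these bounds are the whole content of the theorem, and you leave them as ``the main obstacle''. Convergence $\theta_k\to\theta^*$ plus checking a few $k$ does not prove them for all $k$ and $n$.

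Moreover, the local structure you plan to compute with is misidentified.
\begin{itemize}
\item $K_3$ is not a quadratic spline with leading coefficient $\pm1/2$. It is a cubic monospline ($K_3'''=\mp1$ between nodes) whose \emph{second} derivative jumps by $\pm a_k$ at $\tau_k$.
\item Its zeros $x_k$ are double, because $(\cdot-x_k)_+\in S_{n,3}$ as well. So $K_3'(x_k)=0$ and there is no derivative recurrence to run.
\item In (b) the pieces of $K_4$ are quartic, not cubic. Since $S_{n,4}\subset C^1$, $K_4''(x_k)=R[Q_{n+1}^G;(\cdot-x_k)_+]$ is not forced to vanish. A one-node piece therefore depends on two parameters ($\theta_i$ and $\delta_i$), so ``monotonicity in $\theta_i$'' is not a well-posed reduction.
\item The middle interval is exceptional. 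For even $n$ a node lies at the knot $x_{n/2}$; for odd $n$ the middle interval contains two nodes.
\item The limit piece $(t-x_k)^2(x_{k+1}-t)^2/24$ arises as $\theta_i\to1$ (the node merging with a knot), not from a symmetric node.
\end{itemize}

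The idea you are missing, as used in the paper, is a comparison function combined with zero counting. For (a) take the adjusted Bernoulli monospline $g(t)=n^{-3}\bigl(B_3(\theta)-B_3(\{nt+\theta\})\bigr)$ with $\theta=(3+\sqrt3)/6$; for (b) take $g(t)=n^{-4}\bigl(B_4(\{nt\})+1/720\bigr)$. The spline Budan--Fourier bound (Lemma~\ref{l2}) shows that $g-K_r$ has only the forced double zeros at the $x_k$, so it is one-signed. This gives $g\le K_3(Q_{n+1}^{R,r};\cdot)\le0$ and $0\le K_4(Q_{n+1}^{G};\cdot)\le g$, and the constants are just $\|g\|$.

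Your per-interval route can also be completed once the ingredients are corrected.
\begin{itemize}
\item In (a), put $u=n(t-x_k)$; the piece of $K_3(Q_{n+1}^{R,r};\cdot)$ then depends only on $\theta_k$. For $u\in[\theta_k,1]$ it equals $-\frac{(1-u)^2}{6n^3}\bigl(\beta_k-(1-u)\bigr)$ with $\beta_k=\frac32-\frac{1}{2(1-\theta_k)}$. On $[0,\theta_k]$ it is decreasing, since $\theta_k\le 1/3$. Its minimum on the interval is therefore $-2\beta_k^3/(81n^3)$. Lemma~\ref{l1} gives $\theta^*<\theta_k\le1/3$ with $\theta^*=(3-\sqrt3)/6$, hence $0<\beta_k<\sqrt3/2$, which is exactly the required bound.
\item In (b), avoid the two-parameter computation. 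On each $[x_k,x_{k+1}]$ set $h_k(t)=(t-x_k)^2(x_{k+1}-t)^2/24$. Then $h_k-K_4$ has double zeros at both endpoints and a nondecreasing piecewise constant third derivative (jumps $+a_i>0$), so its second derivative is convex. Rolle's theorem then forces $h_k-K_4\ge0$, giving $K_4\le 1/(384n^4)$.
\end{itemize}
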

Since the supremum norms of $f^{\prime\prime\prime}$ and $f^{(4)}$
may be  not accessible or difficult to evaluate, evidently the error
bounds in Theorem~\ref{t2} are easier to apply than those in
Corollary~\ref{c1}. Even in the cases when
$\|f^{\prime\prime\prime}\|$ or $\|f^{(4)}\|$ is known, it can still
happen that the estimates in Theorem~\ref{t2} are superior to those
from Corollary~\ref{c1}.

Before concluding this section, we find appropriate to briefly
mention a few more facts about Peano kernel representation of the
remainders of quadrature formulae, for more details the reader is
referred to \cite{HB1977}.

It follows from \eqref{e1.3} that the requirement $K_r(Q;u)=0$ for
some $u\in (0,1)$ is equivalent to $I[f_{u}]=Q[f_{u}]$, where
$f_{u}(x)=(x-u)_{+}^{r-1}$. Hence, in order that $Q$ evaluates to
the exact value definite integrals of functions from a linear space
of splines of degree $r-1$ with maximal dimension, it is necessary
that the monospline $K_r(Q;\cdot)$ has the maximal possible number
of zeros in $(0,1)$. The problem of the existence and uniqueness of
monosplines satisfying boundary conditions and having maximal number
of prescribed zeros in $(0,1)$ (the fundamental theorem of algebra
for monosplines) has been resolved by Karlin and Micchelli
\cite{KM1972}. Quadrature formulae corresponding to monosplines of
the form \eqref{e1.4}-\eqref{e1.5} with maximal number of
preassigned zeros in $(0,1)$ are called Gauss-type quadratures
associated with the space of spline functions of degree $r-1$ having
knots at these zeros. The results in \cite{KM1972} assert that
Gauss-type quadratures for spaces of splines exist and are unique,
and as in the case of classical Gauss-type quadratures associated
with spaces of algebraic polynomials, all their weights are
positive.

We finally point out that, in view of \eqref{e1.2}, a quadrature
formula $Q$ is definite of order $r$ if and only if $ADP(Q)=r-1$ and
$K_r(Q;t)$ does not change its sign in $(0,1)$. Therefore, all zeros
of $K_r(Q;\cdot)$ in $(0,1)$ must have even multiplicities.

Theorem~\ref{t1} is proved in the next section, and in section~3 we
present the proof of Theorem~\ref{t2}.
\section{Proof of Theorem~\ref{t1}}
In view of Remark~\ref{r1}, it suffices to prove only the claims of
Theorem~\ref{t1} concerning $Q_{n+1}^{R,r}$. We denote the right
Radau quadrature formula associated with $S_{n,3}$ by
$$
Q_{n+1}^{R,r}[f]=\sum_{k=0}^{n-1}a_{k}\,f(\tau_k)+a_n\,f(1),\quad
0<\tau_0<\cdots<\tau_{n-1}<1,
$$
where, for the sake of simplicity, we skip the second indices in the
weights and nodes (we also write $x_k=k/n$, $k=0,\ldots,n$, see
\eqref{e1.8}).

According to Theorem~B, we have
\begin{equation}\label{e3.2}
\tau_k=\frac{k+\theta_k}{n},\quad k=0,\ldots,n-1,
\end{equation}
with
\begin{equation}\label{e3.3}
\theta_0=\frac{1}{3},\quad
\theta_{k}=\frac{1-\theta_{k-1}}{5-6\theta_{k-1}},\quad
k=1,\ldots,n-1,
\end{equation}
\begin{equation}\label{e3.4}
a_{k}=\frac{1}{6n\theta_{k}(1-\theta_k)},\quad 0\leq k\leq n-1,
\end{equation}
and
\begin{equation}\label{e3.5}
a_{n}=\frac{2-3\theta_{n-1}}{6n(1-\theta_{n-1})}.
\end{equation}

\begin{lemma}\label{l1}
The sequence $\{\theta_k\}$ in \eqref{e3.3} has the explicit
representation
$$
\theta_{k}=\frac{s_k}{s_k+s_{k+1}},\quad s_k=(2+\sqrt{3})^k
+(2-\sqrt{3})^k\,,\quad k\in \mathbb{N}\,.
$$
\end{lemma}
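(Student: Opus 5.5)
Since the recurrence \eqref{e3.3} is a M\"obius (linear fractional) map, the natural plan is to linearize it. We write $\theta_k = p_k/q_k$, so that \eqref{e3.3} becomes the linear system
$$
p_k = q_{k-1}-p_{k-1},\qquad q_k = 5q_{k-1}-6p_{k-1},
$$
whose matrix $\begin{pmatrix}-1&1\\-6&5\end{pmatrix}$ has trace $4$ and determinant $1$. Its eigenvalues are therefore $2\pm\sqrt{3}$, and both $p_k$ and $q_k$ satisfy the second-order recurrence $u_{k+1}=4u_k-u_{k-1}$. The sequence $s_k=(2+\sqrt3)^k+(2-\sqrt3)^k$ satisfies exactly this recurrence, with $s_0=2$, $s_1=4$, $s_2=14$. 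The proof then reduces to matching the claimed ansatz $p_k=s_k$, $q_k=s_k+s_{k+1}$ against this system, together with its initial values.

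The key steps are as follows. First, we check the base case: $\theta_0 = s_0/(s_0+s_1) = 2/6 = 1/3$, as required. Second, we verify that $\theta_k=s_k/(s_k+s_{k+1})$ satisfies \eqref{e3.3}. Substituting $\theta_{k-1}=s_{k-1}/(s_{k-1}+s_k)$ gives
$$
\frac{1-\theta_{k-1}}{5-6\theta_{k-1}}=\frac{s_k}{5(s_{k-1}+s_k)-6s_{k-1}}=\frac{s_k}{5s_k-s_{k-1}}.
$$
By the recurrence $s_{k+1}=4s_k-s_{k-1}$, we have $5s_k-s_{k-1}=s_k+s_{k+1}$, which yields exactly $\theta_k$. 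Third, induction on $k$ completes the argument. The recurrence $s_{k+1}=4s_k-s_{k-1}$ itself holds because $2\pm\sqrt3$ are the roots of $x^2-4x+1=0$.

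There is no real obstacle here. The only point requiring care is that all denominators are nonzero, and this is immediate because $s_k>0$ for every $k$. We also note that the statement says $k\in\mathbb N$, but the formula should be understood as valid for $k\ge 0$, including the initial value $\theta_0=1/3$.
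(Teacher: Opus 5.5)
Your proposal is correct and follows essentially the same route as the paper: induction from $\theta_0 = s_0/(s_0+s_1) = 1/3$, with the step reducing $\frac{1-\theta_{k-1}}{5-6\theta_{k-1}}$ to $\frac{s_k}{5s_k-s_{k-1}}$ and then using $s_{k-1}+s_{k+1}=4s_k$. Your M\"obius linearization, which shows where the ansatz comes from, is extra motivation and not needed for the proof; the paper only remarks that such equations are Riccati difference equations.
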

\begin{proof}
We apply induction with respect to $k$. The statement is true for
$k=0$, since $s_0=2$ and $s_1=4$. Assuming $\ds
\theta_{k-1}=\frac{s_{k-1}}{s_{k-1}+s_k}$ for some $k\in\mathbb{N}$,
then
$$
\theta_k=\frac{1-\theta_{k-1}}{5-6\theta_{k-1}}
=\frac{s_k}{5s_k-s_{k-1}}\stackrel{\mbox{?}}{=}\frac{s_k}{s_k+s_{k+1}}.
$$
The last equality follows from the identity $s_{k-1}+s_{k+1}=4s_k$,
which is verified using $(2\pm\sqrt{3})^2+1=4(2\pm\sqrt{3})$. This
accomplishes the induction step and thereby the proof of
Lemma~\ref{l1}. The proposed method of proof does not give a clue
about the way the explicit form of the solution of this recurrence
equation was deduced. Equations like \eqref{e3.3} are called Riccati
difference equations, see e.g. \cite{LB1955} for a general approach
to their solutions.
\end{proof}

For $f\in C^3[0,1]$ the remainder of $Q_{n+1}^{R,r}$ admits the
representation
$$
R\big[Q_{n+1}^{R,r};f\big]=\int\limits_0^1
K_3(Q_{n+1}^{R,r};t)f^{\prime\prime\prime}(t)\,dt,
$$
where, according to \eqref{e1.5},
$$
K_3(Q_{n+1}^{R,r};t)=-\frac{t^3}{6}+\frac{1}{2}
\sum_{i=0}^{n-1}a_i(t-\tau_i)_{+}^2\leq 0,\quad t\in (0,1).
$$
The zeros of $K_3(Q_{n+1}^{R,r};\cdot)$ in $(0,1)$ are
$\{x_k\}_{k=1}^{n-1}$, and each of them is double. The error
constant of $Q_{n+1}^{R,r}$ is given by
\begin{equation}\label{e3.6}
c_3\big(Q_{n+1}^{R,r}\big)=\int\limits_0^1 K_3(Q_{n+1}^{R,r};t)\,dt
=\sum_{k=0}^{n-1}\int\limits_{x_k}^{x_{k+1}}K_3(Q_{n+1}^{R,r};t)\,dt
=:\sum_{k=0}^{n-1}I_k\,.
\end{equation}
Clearly,
\begin{equation}\label{e3.7}
I_k=\frac{1}{24}\big(x_k^4-x_{k+1}^4\big) +\frac{1}{6}\,J_k,
\end{equation}
where
$$
J_k=\sum_{i=0}^{k}a_i(x_{k+1}-\tau_i)^3
-\sum_{i=0}^{k-1}a_i(x_{k}-\tau_i)^3\,.
$$
By using $x_{k+1}-x_k=1/n$, we obtain
\begin{equation*}
\begin{split}
J_k=& \frac{1}{n}\sum_{i=0}^{k}
a_i\Big[(x_{k+1}-\tau_i)^2+(x_{k+1}-\tau_i)(x_{k}-\tau_i)
+(x_{k}-\tau_i)^2\Big]-a_k(\tau_k-x_k)^3\\
=&\frac{1}{n}\sum_{i=0}^{k}a_i
\Big[2(x_{k+1}-\tau_i)^2-\frac{1}{n}(x_{k+1}-\tau_i)
+(x_{k}-\tau_i)^2\Big]-a_k(\tau_k-x_k)^3\\
=&\frac{2}{n}Q_{n+1}^{R,r}\big[(x_{k+1}-\cdot)_{+}^2\big]
-\frac{1}{n^2}Q_{n+1}^{R,r}\big[(x_{k+1}-\cdot)_{+}\big]
+\frac{1}{n}Q_{n+1}^{R,r}\big[(x_{k}-\cdot)_{+}^2\big]\\
&+a_k(\tau_k-x_k)^2(x_{k+1}-\tau_k)\,.
\end{split}
\end{equation*}
Since $Q_{n+1}^{R,r}[f]=I[f]$ for every $f\in S_{n,3}$, we have
\begin{equation*}
\begin{split}
J_k=&\frac{2}{n}I\big[(x_{k+1}-\cdot)_{+}^2\big]
-\frac{1}{n^2}I\big[(x_{k+1}-\cdot)_{+}\big]
+\frac{1}{n}I\big[(x_{k}-\cdot)_{+}^2\big]\\
&+a_k(\tau_k-x_k)^2(x_{k+1}-\tau_k)\\
=&\frac{2}{3n}x_{k+1}^3-\frac{1}{2n^2}x_{k+1}^2+\frac{1}{3n}x_k^3
+a_k(\tau_k-x_k)^2(x_{k+1}-\tau_k)\,.
\end{split}
\end{equation*}
Substituting this expression for $J_k$ in \eqref{e3.7} and replacing
$x_k$, $x_{k+1}$, $a_k$ and $\tau_k$ using \eqref{e1.8},
\eqref{e3.2} and \eqref{e3.4}, we obtain
\begin{equation}\label{e3.8}
I_k=\frac{2\theta_k-1}{72n^4}\,.
\end{equation}
By using Lemma~\ref{l1}, we find
\begin{equation*}
\begin{split}
1-2\theta_k&=\frac{s_{k+1}-s_k}{s_{k+1}+s_k}=
\frac{(\sqrt{3}+1)(2+\sqrt{3})^{k}-(\sqrt{3}-1)(2-\sqrt{3})^k}
{(3+\sqrt{3})(2+\sqrt{3})^k+(3-\sqrt{3})(2-\sqrt{3})^k}\\
&=\frac{\sqrt{3}}{3}\,\frac{(2+\sqrt{3})^{k}-\frac{\sqrt{3}-1}{\sqrt{3}+1}
(2-\sqrt{3})^k}{(2+\sqrt{3})^{k}+\frac{\sqrt{3}-1}{\sqrt{3}+1}
(2-\sqrt{3})^k}=\frac{\sqrt{3}}{3}\,
\frac{(2+\sqrt{3})^{k}-(2-\sqrt{3})^{k+1}}
{(2+\sqrt{3})^{k}+(2-\sqrt{3})^{k+1}}\\
&=\frac{\sqrt{3}}{3}\,\Big(1-\frac{2(2-\sqrt{3})^{k+1}}
{(2+\sqrt{3})^{k}+(2-\sqrt{3})^{k+1}}\Big)\\
&=\frac{\sqrt{3}}{3}\,\Big(1-\frac{2}{(2+\sqrt{3})^{2k+1}+1}\Big)\,.
\end{split}
\end{equation*}
By plugging this expression in \eqref{e3.8}, we arrive at
\begin{equation}\label{e3.9}
I_k=-\frac{\sqrt{3}}{216n^4}\,\Big(1-\frac{2}{(2+\sqrt{3})^{2k+1}+1}\Big),\quad
k=0,\ldots,n-1\,.
\end{equation}
The representation \eqref{e1.10} of $c_3(Q_{n+1}^{R,r})$ in
Theorem~\ref{t1}(a) now follows from \eqref{e3.6} and \eqref{e3.9}.
As was already mentioned, $c_3(Q_{n+1}^{R,l})=-c_3(Q_{n+1}^{R,r})$.
The two-sided estimates \eqref{e1.11} are derived using the
inequalities
$$
\sum_{k=0}^{3}\!\frac{1}{(2\!+\!\sqrt{3})^{2k+1}\!+\!1}\leq\!
\sum_{k=0}^{n-1\!}\frac{1}{(2\!+\!\sqrt{3})^{2k+1}\!+\!1}\leq\!
\sum_{k=0}^{3}\!\frac{1}{(2\!+\!\sqrt{3})^{2k+1}\!+\!1}+\!
\sum_{k=4}^{\infty}\!\frac{1}{(2\!+\!\sqrt{3})^{2k+1}}.
$$
With this Theorem~\ref{t1}(a) is proved, and we proceed with the
proof of part (b). If $f\in C^3[0,1]$, then by the mean value
theorem,
$$
R\big[Q_{n+1}^{R,r};f\big]= \sum_{k=0}^{n-1}
\int\limits_{x_k}^{x_{k+1}}
K_3(Q_{n+1}^{R,r};t)f^{\prime\prime\prime}(t)\,dt
=\sum_{k=0}^{n-1}I_k\,f^{\prime\prime\prime}(\xi_k)
$$
with $\xi_k\in (x_k,x_{k+1})$, $k=0,\ldots,n-1$. We split the last
sum into two parts:
$$
R\big[Q_{n+1}^{R,r};f\big]=-\frac{\sqrt{3}}{216n^3}
\sum_{k=0}^{n-1}\frac{1}{n}\,f^{\prime\prime\prime}(\xi_k)+
\sum_{k=0}^{n-1}\Big[I_k+\frac{\sqrt{3}}{216n^4}\Big]
\,f^{\prime\prime\prime}(\xi_k)=:A+B.
$$
The sum in $A$ is a Riemann sum for the continuous (hence
integrable) function $f^{\prime\prime\prime}$ on $[0,1]$, therefore
$$
A=-\frac{\sqrt{3}}{216n^3}\,
\big(f^{\prime\prime}(1)-f^{\prime\prime}(0)\big)+o(n^{-3})\,.
$$
For $B$ we have, in view of \eqref{e3.9},
$$
B=\frac{\sqrt{3}}{108n^4}\,\sum_{k=0}^{n-1}
\frac{1}{(2+\sqrt{3})^{2k+1}+1}f^{\prime\prime\prime}(\xi_k)=O(n^{-4}).
$$
Hence,
$$
R\big[Q_{n+1}^{R,r};f\big]=A+B=-\frac{\sqrt{3}}{216n^3}\,
\big(f^{\prime\prime}(1)-f^{\prime\prime}(0)\big)+o(n^{-3}),
$$
which proves Theorem~\ref{t1}(b) for the remainders of
$Q_{n+1}^{R,r}$.

For the proof of Theorem~\ref{t1}(c) we need the estimate for the
number of zeros of a spline function in a given interval $(a,b)$,
provided by the Budan-Fourier theorem for splines. For a real-valued
function $f$ defined on the finite interval $[a,b]$, $Z_f(a,b)$
stands for the total number of the zeros of $f$ in $(a,b)$ counted
with their multiplicities. By $S^{-}(a_1,a_2,\ldots,a_m)$ and
$S^{+}(a_1,a_2,\ldots,a_m)$ we denote the number of strong and weak
sign changes, respectively, in the finite sequence of real numbers
$a_1,a_2,\ldots,a_m$.
\begin{lemma} [\cite{KN1995a}, Theorem 2.1]\label{l2}
If $f$ is a polynomial spline function of exact degree $r$ on
$(a,b)$ (i.e. of degree $r$ with $f^{(r)}(t)\ne 0$ for some $t\in
(a,b)$ with finitely many (active) knots in $(a,b)$, all simple),
then
\begin{equation*}
\begin{split}
Z_f(a,b)\leq &Z_{f^{(r)}}(a,b)+
S^{-}(f(a),f^{\prime}(a),\ldots,f^{(r-1)}(a),f^{(r)}(\sigma+))\\
&-S^{+}(f(b),f^{\prime}(b),\ldots,f^{(r-1)}(b),f^{(r)}(\tau-)),
\end{split}
\end{equation*}
where $[\sigma,\tau]\subset [a,b]$ is the largest interval such that
$f^{(r)}(\sigma+)\ne 0$ and $f^{(r)}(\tau-)\ne 0$.
\end{lemma}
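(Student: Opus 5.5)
We would prove Lemma~\ref{l2} by induction on the degree $r$, using a refined Rolle theorem that accounts for the boundary behaviour. The plan is to first reduce to the case where none of $f,f',\ldots,f^{(r)}$ vanishes at the endpoints, and then to peel off one derivative at a time.

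\emph{Step 1 (reduction to nonvanishing endpoint data).} Since $f$ has finitely many knots and is a nonzero polynomial on each piece, every derivative $f^{(j)}$ is nonvanishing on small intervals $(a,a+\varepsilon)$ and $(b-\varepsilon,b)$, unless it vanishes identically there. Here the $\sigma,\tau$ in the statement take care of $f^{(r)}$ vanishing identically near the ends. Put $a_\varepsilon=a+\varepsilon$ and $b_\varepsilon=b-\varepsilon$.
\begin{itemize}
\item On $(a,a_\varepsilon]$ each $f^{(j)}$ is a polynomial, so its zero at $a$ of multiplicity $m$ is accounted for by Taylor expansion.
\item From the Taylor expansion one checks that the sign pattern of $\bigl(f(a_\varepsilon),\ldots,f^{(r)}(a_\varepsilon)\bigr)$ is obtained from that at $a$ by filling the zero entries with signs that create no strong sign changes. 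Hence $S^-$ at $a_\varepsilon$ equals $S^-$ at $a$, with $f^{(r)}(\sigma+)$ as the last entry.
\item Symmetrically, at $b$ the zeros are filled with alternating signs. This yields $S^-$ at $b_\varepsilon$ equal to $S^+$ at $b$.
\end{itemize}
The zeros in $(a,b)$ all lie in $(a_\varepsilon,b_\varepsilon)$ for $\varepsilon$ small, and the same holds for $Z_{f^{(r)}}$, because $f^{(r)}$ is piecewise constant. It therefore suffices to prove
\[
Z_f(a,b)\le Z_{f^{(r)}}(a,b)+S^-\bigl(f(a),\ldots,f^{(r)}(a)\bigr)-S^-\bigl(f(b),\ldots,f^{(r)}(b)\bigr)
\]
when all these endpoint values are nonzero.

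\emph{Step 2 (refined Rolle).} For $g$ continuous and piecewise smooth with $g(a)g'(a)\ne0$ and $g(b)g'(b)\ne0$, we would show
\[
Z_g(a,b)\le Z_{g'}(a,b)+S^-\bigl(g(a),g'(a)\bigr)-S^-\bigl(g(b),g'(b)\bigr).
\]
The argument has two parts.
\begin{itemize}
\item Ordinary Rolle gives $Z_g\le Z_{g'}+1$. Zeros of multiplicity $m$ of $g$ give zeros of multiplicity $m-1$ of $g'$.
\item At the left end, if $g(a)$ and $g'(a)$ have the same sign, then $|g|$ increases initially. Before the first zero of $g$, the function $g'$ must therefore change sign, which yields one more zero of $g'$ in $(a,\text{first zero})$. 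The right end is handled analogously.
\end{itemize}
Zeros of $g'$ count sign changes across knots of $g'$ (jump points). This is the convention needed for $f^{(r)}$, and it is consistent with the multiplicity convention for lower derivatives, since those are continuous: simple knots give $f\in C^{r-1}$.

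\emph{Step 3 (telescoping).} Apply Step~2 to $g=f^{(j)}$ for $j=0,\ldots,r-1$ and sum the resulting inequalities. This uses
\[
S^-(v_0,\ldots,v_r)=\sum_{j}S^-(v_j,v_{j+1}),
\]
which holds because all entries are nonzero. The main obstacle we expect is the bookkeeping in Step~1. One must verify the endpoint sign-filling claims, namely the Descartes-type facts that the Taylor signs fill zeros without adding strong changes at $a$, and maximize the changes at $b$. One must also check that multiplicities of zeros located at knots are counted compatibly. We would first verify these by writing $f^{(j)}(a+\varepsilon)\sim c\,\varepsilon^{m}/m!$, where $c$ is the first nonzero higher derivative.
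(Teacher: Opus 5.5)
The paper gives no proof of this lemma; it quotes it from \cite{KN1995a}. Judged on its own, your Rolle-plus-telescoping scheme is the standard proof of Budan--Fourier type bounds. It is sound in the generic case $\sigma=a$, $\tau=b$, but it has a genuine gap in exactly the case the lemma's $\sigma,\tau$ are there for. You say that $\sigma,\tau$ ``take care of'' $f^{(r)}$ vanishing identically near the ends, but nothing in Steps 1--3 does this. If $\sigma>a$, then $f^{(r)}(a+\varepsilon)=0$ for every small $\varepsilon$, so the shifted endpoint vector never has all entries nonzero, and Step 2 cannot even be started for $g=f^{(r-1)}$. Worse, $f$ and several of its derivatives may vanish identically on $[a,\sigma]$. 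This is exactly what happens in the application in Theorem~\ref{t1}(c), where $s\equiv 0$ on $[0,\tau_{0,2n}]$. There the intermediate counts $Z_{f^{(j)}}(a,b)$ in your telescoping chain are not finite, and such an end zero interval must not be counted at all. For example, with $r=2$, $f=0$ on $[0,\frac12]$ and $f=(x-\frac12)^2$ on $[\frac12,1]$, the right-hand side of the lemma equals $0$.

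To close the gap, shift the left endpoint to $\sigma+\varepsilon$ rather than $a+\varepsilon$, and account for what is lost. Put $q=f|_{[a,\sigma]}\in\pi_{r-1}$ and $c=f^{(r)}(\sigma+)$. Since the knot $\sigma$ is simple, $f=q+c(\cdot-\sigma)^r/r!$ on $[\sigma,\sigma+\varepsilon]$. If $q$ has exact degree $d$, then:
\begin{itemize}
\item $S^-(q(a),\ldots,q^{(r-1)}(a),c)$ equals $S^-(q(a),\ldots,q^{(d)}(a))$ plus an extra term, which is $1$ if $q^{(d)}c<0$ and $0$ otherwise;
\item $S^-(f(\sigma+\varepsilon),\ldots,f^{(r)}(\sigma+\varepsilon))$ equals $S^-(q(\sigma),\ldots,q^{(d)}(\sigma))$ plus the same extra term.
\end{itemize}
Hence the required inequality $Z_q(a,\sigma]\le S^-(q(a),\ldots,q^{(d)}(a))-S^-(q(\sigma),\ldots,q^{(d)}(\sigma))$ is just the classical Budan--Fourier theorem for $q$ on $(a,\sigma]$. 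If $q\equiv 0$, both sides are $0$. The case $\tau<b$ is symmetric: use alternating fills, $S^+$, and the Budan--Fourier bound for $f|_{[\tau,b]}$ on $[\tau,b)$.

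Separately, Step 2 tacitly assumes isolated zeros. Since $f^{(r)}$ may vanish on an interior subinterval, some $f^{(j)}$ can vanish on an interval. You must therefore fix a multiplicity convention for such zero intervals under which the Rolle count $Z_{g'}\ge Z_g-1$ survives, for example $1$ or $2$ according as $g$ does or does not change sign across the interval.
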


The difference $s(t)=K_3(Q_{n+1}^{R,r};t)-K_3(Q_{2n+1}^{R,r};t)$ of
the third Peano kernels of the right Radau quadrature formulae
associated with $S_{n,3}$ and $S_{2n,3}$ is a spline function of
degree two with $3n$ knots in $(0,1)$, which has double zeros at the
points $x_{k}=k/n$, $k=1,\ldots,n-1$. In view of \eqref{e1.4} and
\eqref{e1.5}, $s$ can be represented in two alternative ways,
\begin{equation}\label{e3.10}
s(t)=\frac{1}{2}\Big(\sum_{k=0}^{n-1}a_{k,n}(t-\tau_{k,n})_{+}^2
-\sum_{k=0}^{2n-1}a_{k,2n}(t-\tau_{k,2n})_{+}^2 \Big)\,,
\end{equation}
\begin{equation}\label{e3.11}
s(t)=\frac{1}{2}\Big(\sum_{k=0}^{2n}a_{k,2n}(\tau_{k,2n}-t)_{+}^2-
\sum_{k=0}^{n}a_{k,n}(\tau_{k,n}-t)_{+}^2\Big)\,.
\end{equation}
Recall that all weights $a_{k,n}$ and $a_{k,2n}$ of Radau quadrature
formulae are positive, therefore $s(t)$ is a spline function of
exact degree two. Indeed,
$$
s^{\prime\prime}(t)=\sum_{k=0}^{n-1}a_{k,n}(t-\tau_{k,n})_{+}^0
-\sum_{k=0}^{2n-1}a_{k,2n}(t-\tau_{k,2n})_{+}^0
$$
is a piecewise constant function whose $n$ positive jumps cannot be
canceled out by the $2n$ negative jumps. This observation implies
also that the number of sign changes of $s^{\prime\prime}$ in
$(0,1)$ does not exceed $2n$, i.e.,
\begin{equation}\label{e3.12}
Z_{s^{\prime\prime}}(0,1)\leq 2n\,.
\end{equation}

By Theorem~B, $\ds
\tau_{0,2n}=\frac{1}{6n}<\tau_{0,n}=\frac{1}{3n}$, therefore
$s^{\prime\prime}(\tau_{0,2n}+)=-a_{0,2n}<0$ while $s(t)\equiv 0$
for $t\in [0,\tau_{0,2n})$. From $\tau_{n,n}=\tau_{2n,2n}=1$ and
\eqref{e3.11} we obtain $s^{\prime\prime}(1-)=a_{2n,2n}-a_{n,n}$. We
shall show that $a_{2n,2n}-a_{n,n}\ne 0$, in fact,
\begin{equation}\label{e3.13}
a_{2n,2n}-a_{n,n}<0\,.
\end{equation}
From \eqref{e3.5} and Lemma~\ref{l1} we find
$$
a_{n,n}=\frac{2s_n-s_{n-1}}{6n\,s_n},
$$
hence \eqref{e3.13} is equivalent to inequality
$$
2\,\frac{s_{n-1}}{s_n}-\frac{s_{2n-1}}{s_{2n}}<2,
$$
which obviously is true since $0<s_{k-1}<s_k$, $k\in \mathbb{N}$.

Lemma~\ref{l2} applied with $r=2$, $f=s$, $[a,b]=[0,1]$ and
$[\sigma,\tau]=[\tau_{0,2n},1]$ yields
\begin{equation*}
\begin{split}
Z_s(0,1)&\leq Z_{s^{\prime\prime}}(0,1)+
S^{-}\big(s(0),s^{\prime}(0),s^{\prime\prime}(\tau_{0,2n}+)\big)
-S^{+}\big(s(1),s^{\prime}(1),s^{\prime\prime}(1-)\big)\\
&\leq 2n+S^{-}\big(0,0,-a_{0,2n}\big)-
S^{+}\big(0,0,a_{2n,2n}-a_{n,n}\big)\\
&\leq 2n-2\,.
\end{split}
\end{equation*}
Recalling that $s$ has double zeros at the points $k/n$,
$k=1,\ldots,n-1$, we conclude that $s$ has no other zeros in
$(0,1)$. Since $s(1)=s^{\prime}(1)=0$ and $s^{\prime\prime}(1-)<0$,
it follows that $s(t)\leq 0$, $t\in (0,1)$, i.e.,
$$
K_3(Q_{n+1}^{R,r};t)\leq K_3(Q_{2n+1}^{R,r};t)\leq 0,\quad t\in
(0,1).
$$
If $f\in W^3_1[0,1]$ and $f^{\prime\prime\prime}(t)\geq 0$ a.e. in
$[0,1]$, then
$$
R\big[Q_{n+1}^{R,r};f\big]-R\big[Q_{2n+1}^{R,r};f\big]=\int\limits_0^1
s(t)f^{\prime\prime\prime}(t)\,dt\leq 0,
$$
and consequently
$$
R\big[Q_{n+1}^{R,r};f\big] \leq R\big[Q_{2n+1}^{R,r};f\big]\leq 0.
$$
With this Theorem~\ref{t1}(c) is proved. \qed

Figure~1 illustrates the situation when $n=4$. Its left part depicts
the graphs of the third Peano kernels of the $5$-point and $9$-point
right Radau quadrature formulae. We observe that the difference of
the two Peano kernels, depicted on the right, vanishes on the
interval $[0,\tau_{0,9}]$.

\begin{figure}[htp]
\centering
\includegraphics[scale=0.3,clip]{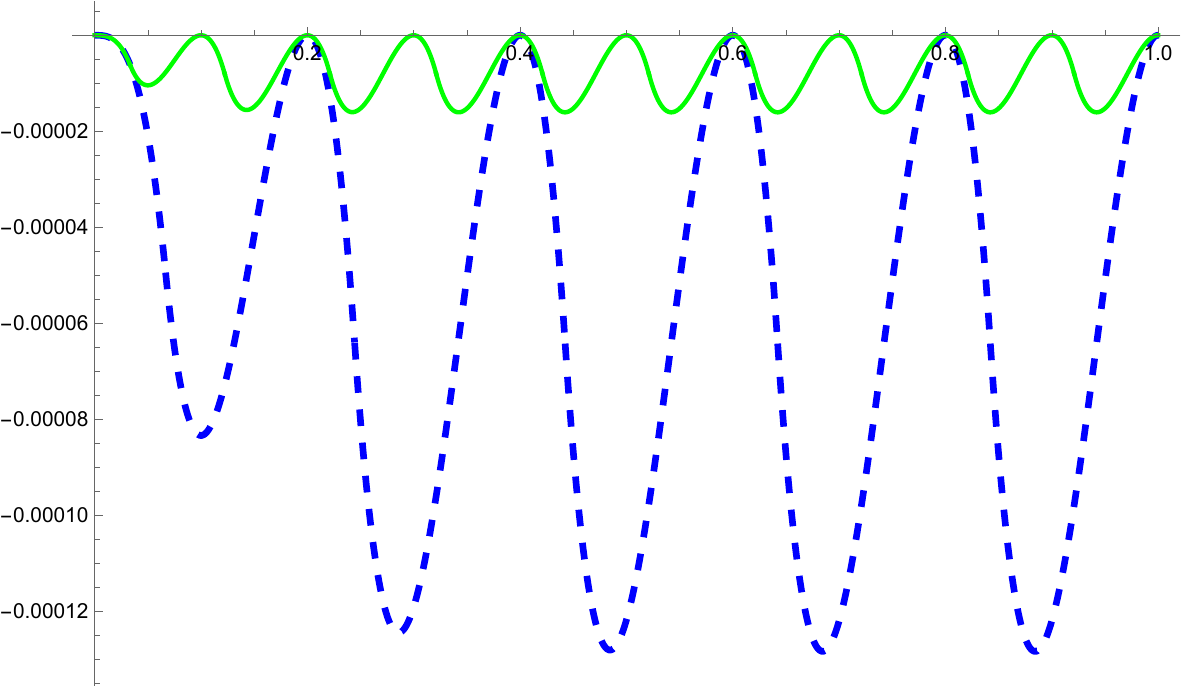} \hspace*{0.2cm}
\includegraphics[scale=0.3,clip]{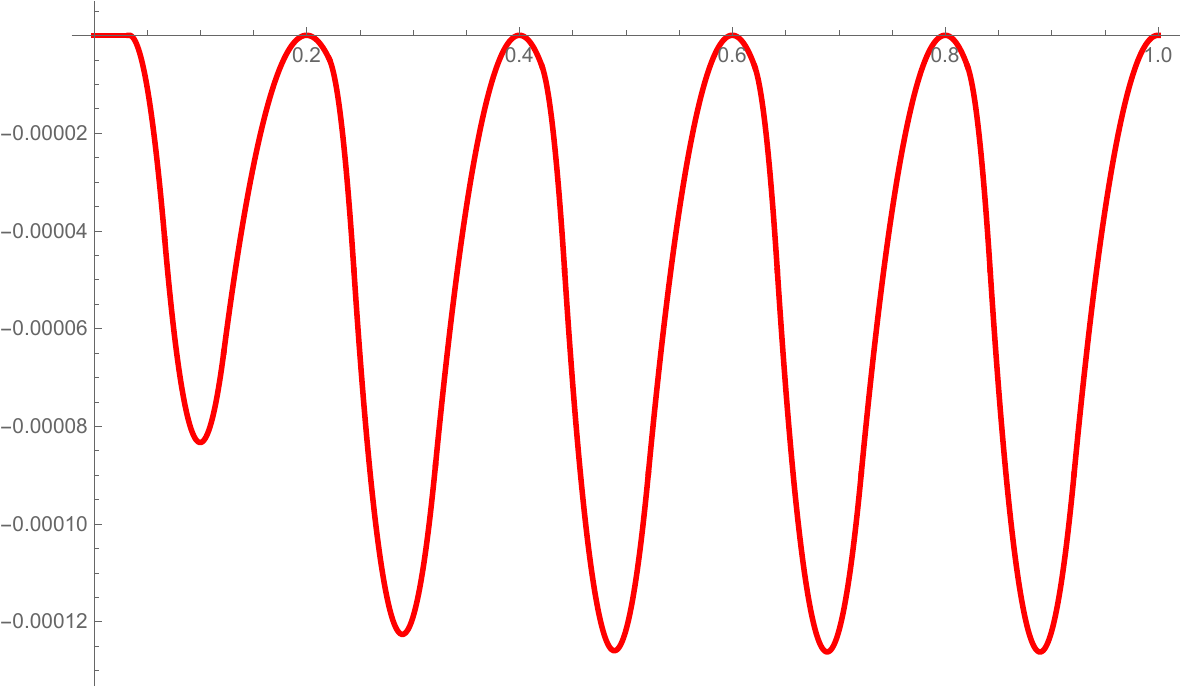}
\label{fig1}
\end{figure}

{\small
\begin{equation*}
\begin{split}
\text{Figure 1. }&\text{ Left: graphs of
$K_3(Q_5^{R,r};t)$ (dashed) and $K_3(Q_{9}^{R,r};t)$ (solid);}\\
&\text{ Right: graph of $K_3(Q_5^{R,r};t)-K_3(Q_{9}^{R,r};t)$.}
\end{split}
\end{equation*}}

\section{Proof of Theorem~\ref{t2}}
\begin{proof}[Proof of Theorem~\ref{t2}(a)] In view of
Remark~\ref{r1}, it suffices to prove the estimates \eqref{e1.14}
only for $R\big[Q_{n+1}^{R,r};f\big]$. We apply the argument from
\cite{KN1995} to the proof of \eqref{e1.6}, comparing
$K_3(Q_{n+1}^{R,r};t)$ with the adjusted one-periodic Bernoulli
monospline
\begin{equation}\label{e4.1}
g(t)=\frac{1}{n^3}\,\Big(B_3(\theta)-B_3\big(\{nt+\theta\}\big)
\Big), \quad \theta=\frac{3+\sqrt{3}}{6}.
\end{equation}
Here, $\{\cdot\}$ is the fractional part function, $B_3$ is the
third Bernoulli polynomial with leading coefficient $1/6$,
$$
B_3(t)=\frac{1}{6}\Big(t^3-\frac{3}{2}t^2+\frac{1}{2}t\Big),
$$
and
$$
B_3(\theta)=-\|B_3\|=-\frac{\sqrt{3}}{216}.
$$
We need the following properties of $g(t)$, defined in \eqref{e4.1}:
\begin{enumerate}[(i)]
\item
The zeros of $g(t)$ in $(0,1)$ are $\ds x_{k}=\frac{k}{n}$,
$k=1,\ldots,n-1$, each of them double;
\item
$g(t)$ satisfies the inequalities
$$
-\frac{\sqrt{3}}{108n^3}\leq g(t)\leq 0,\quad  t\in [0,1]\,;
$$
\item
$g(t)$ has $n$ simple knots in $(0,1)$ located at the points $\ds
\frac{k-\theta}{n},\ k=1,\ldots,n$;
\item
$g(0)=g^{\prime}(0)=g(1)=g^{\prime}(1)=0$ and
$$
g^{\prime\prime}(0+)=g^{\prime\prime}(1-)=\frac{1-2\theta}{2n}<0.
$$
\end{enumerate}
The set of zeros of $K_3(Q_{n+1}^{R,r};t)$ in $(0,1)$ coincides with
that of the zeros of $g(t)$, namely, the double zeros at $x_{k}$,
$k=1,\ldots, n-1$. Furthermore,
\begin{equation}\label{e4.2}
\begin{split}
&K_3(Q_{n+1}^{R,r};0)=K_3^{\prime}(Q_{n+1}^{R,r};0)=
K_3(Q_{n+1}^{R,r};1)=K_3^{\prime}(Q_{n+1}^{R,r};1)=0,\\
&K_3^{\prime\prime}(Q_{n+1}^{R,r};0+)=0,\quad
K_3^{\prime\prime}(Q_{n+1}^{R,r};1-)=a_{n,n}>0.
\end{split}
\end{equation}
Then $s(t)=g(t)-K_3(Q_{n+1}^{R,r};t)$ is a spline function of degree
two with $2n$ knots in $(0,1)$. We apply Lemma~\ref{l2} to $s$ and
obtain
\begin{equation*}
\begin{split}
2n-2&\leq Z_s(0,1)\leq Z_{s^{\prime\prime}}(0,1)+
S^{-}\big(s(0),s^{\prime}(0),s^{\prime\prime}(0+)\big) -
S^{+}\big(s(1),s^{\prime}(1),s^{\prime\prime}(1-)\big)\\
&\leq 2n+S^{-}\Big(0,0,\frac{1-2\theta}{2n}\Big)-
S^{+}\Big(0,0,\frac{1-2\theta}{2n}-a_{n,n}\Big)\\
&\leq 2n-2.
\end{split}
\end{equation*}
Hence, $s(t)$ has no other zeros in $(0,1)$ except the double ones
at $x_{k}$, $k=1,\ldots,n-1$, therefore $s(t)$ does not change its
sign in $(0,1)$. From (iv) and \eqref{e4.2} it follows that
$s(t)\leq 0$ on $[0,1]$, which together with (ii) implies
\begin{equation}\label{e4.3}
-\frac{\sqrt{3}}{108n^3}\leq g(t)\leq K_3(Q_{n+1}^{R,r};t)\leq
0,\quad t\in [0,1]\,.
\end{equation}
If $f\in C^3[0,1]$ and $f^{\prime\prime\prime}(t)\geq 0$, $t\in
[0,1]$, then \eqref{e4.3} implies
\begin{equation*}
\begin{split}
0&\geq R[Q_{n+1}^{R,r};f]=
\int\limits_0^1K_3(Q_{n+1}^{R,r};t)f^{\prime\prime\prime}(t)\,dt
\geq \min_{t\in [0,1]}K_3(Q_{n+1}^{R,r};t)\int\limits_0^1
f^{\prime\prime\prime}(t)\,dt\\
&\geq \min_{t\in [0,1]}g(t)\,\int\limits_0^1
f^{\prime\prime\prime}(t)\,dt = -\frac{\sqrt{3}}{108n^3}
\big(f^{\prime\prime}(1)-f^{\prime\prime}(0)\big).
\end{split}
\end{equation*}
The proof of Theorem~\ref{t2}(a) is complete.
\end{proof}

Figure~2 shows how close to each other are the graphs of the third
Peano kernel of a right Radau quadrature formula and the associated
adjusted Bernoulli monospline $g(t)$ defined in \eqref{e4.1} in the
case $n=4$. For larger $n$, except for a small neighborhood of the
left end-point of the interval, the two graphs are practically
undistinguishable.

\begin{figure}[htp]
\centering
\includegraphics[scale=0.5,clip]{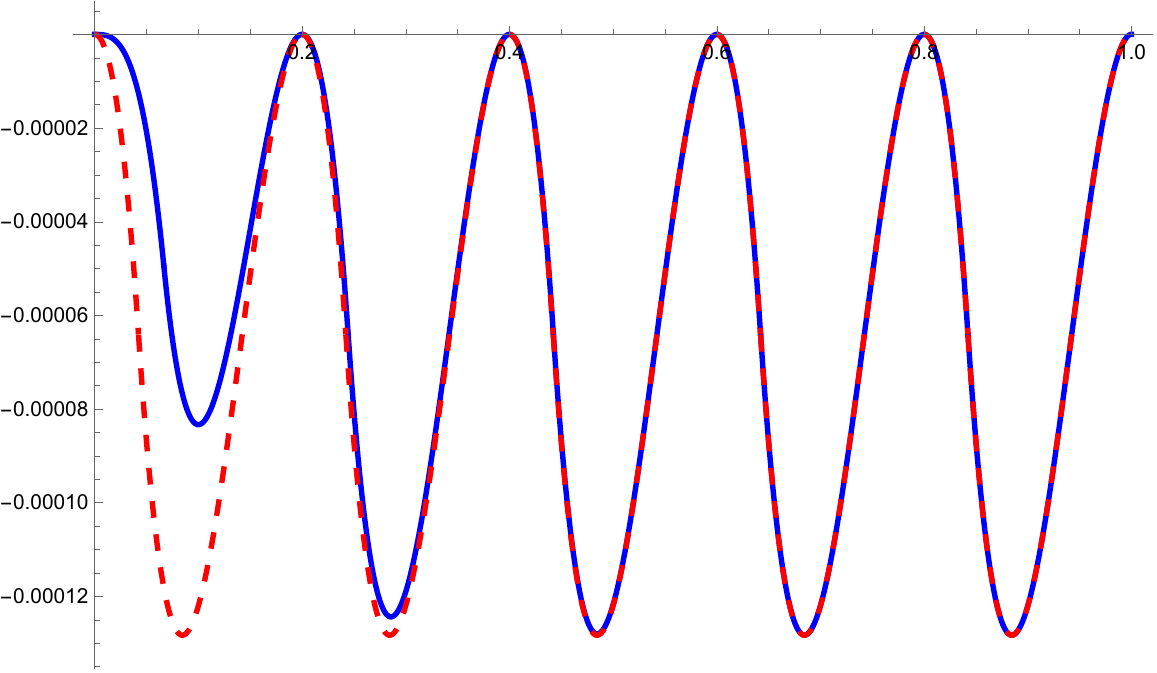}
\label{fig2}
\end{figure}

{\small
\begin{equation*}
\begin{split}
\text{Figure 2.}&\text{ Graphs of Peano kernel
$K_3(Q_{n+1}^{R,r};t)$ (solid) and of the associated} \\
&\text{ adjusted Bernoulli monospline $g(t)$ defined in
(3.4)(dashed), $n=4$.}
\end{split}
\end{equation*}}

\begin{proof}[Proof of Theorem~\ref{t2}(b)]
The argument is similar to that in the proof of part (a). The fourth
Peano kernel of the $(n+1)$-point Gaussian quadrature formula
$$
Q_{n+1}^{G}[f]=\sum_{i=1}^{n+1}a_{i,n+1}^{G}f(\tau_{i,n+1}^{G}),\quad
0<\tau_{1,n+1}<\cdots<\tau_{n+1,n+1}<1,
$$
associated with $S_{n,4}$, is compared with the adjusted Bernoulli
monospline
\begin{equation}\label{e4.4}
g(t)=\frac{1}{n^4}\Big(B_4(\{nt\})-B_4(0)\Big)=
\frac{1}{n^4}\Big(B_4(\{nt\})+\frac{1}{720}\Big),
\end{equation}
where $B_4(t)$ is the fourth Bernoulli polynomial
$$
B_4(t)=\frac{1}{24}\Big(t^4-2t^3+t^2-\frac{1}{30}\Big).
$$
Now $g(t)$ is a monospline of degree four which has $n-1$ simple
knots in $(0,1)$ located at the points $\ds x_{k}=\frac{k}{n}$,
$k=1,\ldots,n-1$. It follows from
$$
-B_4(0)=-B_4(1)=\frac{1}{720}=\|B_4\|
$$
that $g(t)\geq 0$, $t\in (0,1)$, and $g$ has double zeros in $(0,1)$
at $x_{k}$, $k=1,\ldots,n-1$. Moreover,
\begin{equation}\label{e4.5}
\|g\|=\frac{1}{n^4}\Big(\max_{t\in
[0,1]}B_4(t)+\frac{1}{720}\Big)=\frac{1}{384n^4}\,.
\end{equation}

The difference $s(t)=g(t)-K_4(Q_{n+1}^{G};t)$ is a spline function
of degree three with $2n$ simple knots in $(0,1)$, namely, $\ds
\{x_{k}\}_{k=1}^{n-1}\cup \{\tau_{k,n+1}^{G}\}_{k=1}^{n+1}$. We have
\begin{eqnarray*}
&&s(0)=s^{\prime}(0)=s(1)=s^{\prime}(1)=0,\\
&&s^{\prime\prime}(0)=s^{\prime\prime}(1)=\frac{1}{12n^2},\\
&&s^{\prime\prime\prime}(0+)=-\frac{1}{2n},\quad
s^{\prime\prime\prime}(1-)=\frac{1}{2n}.
\end{eqnarray*}
The explicit form of $s^{\prime\prime\prime}(t)$ for $t\in (0,1)$ is
$$
s^{\prime\prime\prime}(t)=-\frac{1}{2n}-
\frac{1}{n}\sum_{k=0}^{n-1}\big(t-x_{k}\big)_{+}^{0}+
\sum_{k=1}^{n+1}a_{k,n+1}^{G}\big(t-\tau_{k,n+1}^{G}\big)_{+}^{0}.
$$
Taking into account that all Gaussian weights $a_{k,n+1}^{G}$ are
positive, we conclude that
$$
Z_{s^{\prime\prime\prime}}(0,1)\leq 2n-1\,.
$$
By applying Lemma~\ref{l2} we obtain
\begin{equation*}
\begin{split}
2n-2\leq Z_s(0,1)\leq& Z_{s^{\prime\prime\prime}}(0,1)+
S^{-}\big(s(0),s^{\prime}(0),s^{\prime\prime}(0),s^{\prime\prime\prime}(0+)\big)\\
&-
S^{+}\big(s(1),s^{\prime}(1),s^{\prime\prime}(1),s^{\prime\prime\prime}(1-)\big)\\
&\leq 2n-1+S^{-}\Big(0,0,\frac{1}{12n^2},-\frac{1}{2n}\Big)-
S^{+}\Big(0,0,\frac{1}{12n^2}, \frac{1}{2n}\Big)\\
&= 2n-2.
\end{split}
\end{equation*}
Hence, the only zeros of $s(t)$ in $(0,1)$ are the double zeros at
$x_{k}$, $k=1,\ldots,n-1$, and $s(t)$ does not change its sign in
$(0,1)$. Since $s(0)=s^{\prime}(0)=0$ and $s^{\prime\prime}(0)>0$,
it follows that $s(t)\geq 0$ on $[0,1]$, hence
\begin{equation}\label{e4.6}
g(t)\geq K_4(Q_{n+1}^{G};t)\geq 0,\quad t\in [0,1].
\end{equation}
If $f\in C^4[0,1]$ and $f^{(4)}(t)\geq 0$ on $[0,1]$, then
\eqref{e4.4} and \eqref{e4.6} imply
\begin{equation*}
\begin{split}
0&\leq
R\big[Q_{n+1}^{G};f\big]=\int\limits_{0}^{1}K_4(Q_{n+1}^{G};t)f^{(4)}(t)\,dt
\leq \max_{t\in [0,1]}g(t)
\int\limits_{0}^{1}f^{(4)}(t)\,dt\\
&=\frac{1}{384n^4}\Big(f^{\prime\prime\prime}(1)-f^{\prime\prime\prime}(0)\Big).
\end{split}
\end{equation*}
The proof of Theorem~\ref{t2}(b) is complete.
\end{proof}
\begin{remark}\label{r4}
Using Lemma~\ref{l2}, error estimates analogous to those in
Theorem~\ref{t2} can be proved for all Gauss-type quadrature
formulae associated with the spaces $S_{n,r}$, $r>4$, defined in
\eqref{e1.8}. However, since the Gauss-type quadrature formulae are
not known for $r>4$, these estimates are not of practical
importance.
\end{remark}

\section*{Acknowledgement} This research is supported by the
Bulgarian National Research Fund under Contract KP-06-N62/4.


\begin{thebibliography}{99}
\bibitem{BB1}
B.\,D. Bojanov, Uniqueness of the monosplines of least deviation,
in: \textit{Numerische Integration} (G. H\"ammerlin, Ed.), ISNM 45,
Birkh\"auser, Basel, 1979, 67--97.

\bibitem{BB2}
B.\,D. Bojanov, Existence and characterization of monosplines of
least $L_p$ deviation, in: \textit{Constructive Function Theory '77}
(Bl. Sendov and D. Va\v{c}ov, Eds.), Sofia, BAN, 1980, 249--268.

\bibitem{BB3} Bojanov, B.\,D.: Uniqueness of the optimal nodes of
quadrature formulae, \textit{Math. Comput.} 36, 1981, 525--546.

\bibitem{LB1955}
L. Brand, A sequence defined by a difference equation, \textit{Amer.
Math. Monthly} 62, 1955, 489--492.

\bibitem{HB1977}
H. Brass, \textit{Quadraturverfahren}, Vandenhoeck \& Ruprecht,
G\"{o}ttingen, 1977.

\bibitem{KJ1976}
K. Jetter, Optimale Quadraturformeln mit semidefiniten Kernen,
\textit{Numer. Math.} 25, 1976, 239--249.

\bibitem{KM1972}
S. Karlin, C. Micchelli, The fundamental theorem of algebra for
monosplines satisfying boundary conditions, \textit{Isr. J. Math.
Soc.} 11, 1972, 405--451.

\bibitem{KN1995a}
P. K\"ohler, G. Nikolov, Error bounds for Gauss type quadrature
formulae related to spaces of splines with equidistant knots,
\textit{J. Approx. Theory} 81, 1995, 368--388.

\bibitem{KN1995}
P. K\"ohler, G. Nikolov, Error bounds for optimal definite
quadrature formulae, \textit{J. Approx. Theory} 81, 1995, 397--405.

\bibitem{GL1977}
G. Lange, \textit{Beste und optimale definite Quadraturformeln}, PhD
Thesis, Technical University Clausthal, 1977.

\bibitem{GL1979}
G. Lange, Optimale definite Quadraturformeln, in \textit{Numerische
Integration} (G. H\"{a}mmerlin, Ed.), Birkh\"{a}user Verlag, Basel,
1979, pp. 187--197.

\bibitem{GN1993}
G. Nikolov, Gaussian quadrature formulae for splines, in:
\textit{Numerische Integration, IV} (G. H\"{a}mmerlin and H. Brass
Eds.), ISNM 112, Birkh\"{a}user Verlag, Basel, 1993, 267--281.

\bibitem{GN1996}
G. Nikolov, On certain definite quadrature formulae, \textit{J.
Comp. Appl. Math.} 75, 1996, 329--343.


\bibitem{GP:1913}
G. Peano, Resto nelle formule di quadratura espresso con un
integrale definito. \emph{Atti della Reale Accademia dei Lincei:
Rendiconti (Ser. 5)} 22, 1913, 562--569.

\bibitem{GS1972}
G. Schmeisser, Optimale Quadraturformeln mit semidefiniten Kernen,
\textit{Numer. Math.} 20, 1972, 32--53.

\bibitem{AZ1}
A. Zhensykbaev, Best quadrature formulae for some classes of
periodic differentiable functions, \textit{Izv. Akad. Nauk SSSR Ser.
Mat.} 41, 1977 (in Russian); English Translation in: \textit{Math.
USSR Izv.} 11, 1977, 1055--1071.

\bibitem{AZ2}
A. Zhensykbaev, Monosplines and optimal quadrature formulae for
certain classes of non-periodic functions, \textit{Anal. Math.} 5,
1979, 301--331 (in Russian).
\end{thebibliography}
\end{document}